\numberwithin{equation}{section}
\newtheorem{theorem}{Theorem}[section]
\newtheorem{remark}{Remark}[section]
\numberwithin{figure}{section}
\numberwithin{table}{section}
\newcommand\btd{\raise 2pt \hbox{$\hat\bigtriangledown$}\hskip 1.5pt}
\newcommand\bt{\raise 2pt \hbox{$\bigtriangledown$}\hskip 1.5pt}
\newcommand{\ud}{\mathrm{d}}
\begin{document}
\date{}
\title{Kinetic energy properties of irrotational deep-water Stokes waves}
\author{Jian Li$^a$, ~Shaojie Yang$^{a,b}$\thanks{Corresponding author: jianli\_jakura@163.com(Jian Li); shaojieyang@kust.edu.cn (Shaojie Yang)} \\~\\
\small$^a$ Department of  Mathematics,~~Kunming University of Science and Technology,  \\
\small Kunming, Yunnan 650500, China\\
\small$^b$ Research Center for Mathematics and Interdisciplinary Sciences,\\
 \small Kunming University of Science and Technology,\\
\small Kunming, Yunnan 650500, China}

\date{}
\maketitle
\begin{abstract}
We investigate  kinetic energy properties of an irrotational deep-water Stokes wave. By introducing a conformal hodograph transform, we  perform complex analysis in the new variables, and show that the existence of the streamline time-period for deep-water Stokes waves. Relying on the conformal mappings, we prove some qualitative results about the kinetic energy such as the convexity property of integral means of kinetic energy, the streamline time-period being  non-increasing and independent of initial data and the monotonicity and logarithmic convexity of the total kinetic energy over a streamline time-period. Moreover, taking advantage of the non-increasing nature of the streamline time-period, we show that the streamline time-period strictly larger than the wave period, which suggests the fact that there are no closed paths for all particles in a deep-water Stokes flow and the drift of any streamline is positive. These results allow us to quantify the observation that the kinetic energy and the streamline time-period are larger near the free  surface and decrease with increasing the depth.\\

\noindent\emph{Keywords}: Deep-water Stokes waves; Kinetic energy; Streamline time-period; Conformal mapping\\

\noindent\emph{Mathematics Subject Classification}:~76B15; 30C20\\
\end{abstract}
\noindent\rule{16.5cm}{0.5pt}

\section{Introduction}
In hydrodynamics, the study of steady deep-water waves is a hot topic because typical near-surface water flows are analyzed in terms of the superposition and interaction of these waves, while at great depths the water is almost at rest. This interpretation relies on a detailed understanding of the dynamics of these steady deep-water waves, for which no closed-form solution is known.
In 1847, Stokes proposed a perturbation procedure that provided the first successive approximations to the flow beneath irrotational periodic traveling deep-water waves (called  deep-water "Stokes wave"), and computers have been used to obtain explicitly higher-order Stokes approximations  in recent decades. However, while the power series method proved that Stokes expansion converges for waves of very small steepness \cite{r6}, it has now been established that even for waves of moderate steepness, truncation by a certain order yields inaccurate results \cite{r11,r12}. Relying on analytical methods, a lot of works has been studied about the dynamics of deep-water Stokes wave \cite{r13,r5,r9, r14,r15,r16,r17}.

\subsection{The governing equations}
For two-dimensional water waves, it suffices to investigate the flow characteristics in a
cross-section oriented towards the direction of wave propagation. We choose Cartesian
coordinates $(x,y)$ with the $x$-axis pointing in the direction of wave propagation and the
$y$-axis oriented upwards. Let $y=\eta(x,t)$ be the free surface, $(u(x,y,t),v(x,y,t))$ the velocity field, with the water occupying
the region $\{(x,y ): -\infty <y <\eta(x, t)\}$ at time $t$. Under the physically reasonable assumption of a homogeneous inviscid fluid, the governing equations for two-dimensional irrotational water waves are the Euler equations:
\begin{equation}\label{1.1}
\begin{cases}
\begin{aligned}
&u_{t} + u u_{x} + v u_{y} &&=  P_x \quad -\infty <y < \eta(x,t), \\
&v_{t} + u v_{x} + v v_{y} &&=  P_y-g \quad -\infty <y < \eta(x,t).\\
\end{aligned}
\end{cases}
\end{equation}
Here $t$ represents time, $P$ is the pressure and $g$=9.8$m/s^{2}$ is the constant gravitational acceleration on the surface of the Earth. Moreover, the associated boundary conditions are the kinematic boundary condition
\begin{equation}\label{1.2}
v=\eta_{t}+u \eta_x \quad\text{on   } y=\eta(x,t),
\end{equation}
which expresses the fact that the water's free surface is an interface.
The pressure of the fluid corresponds to the atmospheric pressure $P_{atm}$ at sea level, that is the dynamic boundary condition
\begin{equation}\label{1.3}
P= P_{atm} \quad \text{on    } y = \eta(x,t),
\end{equation}
which decouples the motion of the water from that of the air above it.
Since the absence of underlying currents is ensured by the irrotational characteristic of the flow, then we have
\begin{equation}\label{1.4}
u_y-v_x=0 \quad \text{for   } -\infty <y < \eta(x,t).
\end{equation}
Since the fact that the density of water is constant, the equation of mass conservation takes the form
\begin{equation}\label{1.5}
u_x+v_y=0 \quad \text{for   } -\infty <y <\eta(x,t).
\end{equation}
Moreover, since the wave speed is much larger than the horizontal fluid velocity (see \cite{r6}), that is
\begin{equation}\label{1.6}
u(x,y)< c\quad \quad \text{for all  } -\infty <y \leq\eta(x,t).
\end{equation}
We assume the water is practically at rest at great depths, implying the constraint
\begin{equation}\label{1.7}
(u,v)\rightarrow (0,0)\quad as \quad y\rightarrow -\infty \quad \text{uniformly for} (x,t)\in {\mathbb R}^2.
\end{equation}
Taking advantage of the $(x,t)$-dependence of the form $(x-ct)$, passing to the moving frame, we reformulate the governing equations $\eqref{1.1}$-$\eqref{1.7}$ as
\begin{equation}\label{1.8}
\begin{cases}
(u-c) u_{x} + v u_{y} = - P_x \quad -\infty <y < \eta(x,t), \\
(u-c) v_{x} + v v_{y} = -P_y-g \quad -\infty <y < \eta(x,t),\\
u_x+v_y=0 \quad \text{for   } -\infty <y <\eta(x,t),\\
u_y-v_x=0 \quad \text{for   } -\infty <y < \eta(x,t),\\
v=(u-c)\eta_x \quad\text{on   } y=\eta(x,t),\\
P= P_{atm} \quad \text{on    } y = \eta(x,t),\\
(u,v)\rightarrow (0,0)\quad as \quad y\rightarrow -\infty \quad \text{uniformly for} (x,t)\in {\mathbb R}^2,
\end{cases}
\end{equation}
with
\begin{equation}\label{a1}
u(x,y)< c\quad \quad \text{for all  } -\infty <y \leq\eta(x,t),
\end{equation}
where $c>0$ represents wave speed.
A Stokes wave is traveling wave solution to the governing equations $\eqref{1.1}$-$\eqref{1.7}$ for which there is a period $\lambda$ such that the free surface $\eta$ and the velocity field $(u,v)$ have period $\lambda$ in the $x$-variable, and $\eta$, $u$ and $P$ are symmetric about the wave crest.

Without loss of generality, we assume the wave crest to be located at $x=0$, then $x=\pm \lambda/2$ is trough lines and $\eta(x)=\eta(-x)$ for all $x\in\mathbb R$ with $\eta$ increasing on $[-\lambda/2,0]$ and decreasing on $(0,\lambda/2]$ (See Figure \ref{f1}). Moreover, the profile is strictly monotonic between successive crests and troughs, and there is a single crest and trough per wavelength $\lambda$.
Passing to the moving frame, we assume that the velocity field $(u,v)$ is bounded and continuously differentiable throughout the fluid domain
\begin{equation*}
D=\{(x,y)\in\mathbb R^2: -\infty <y<\eta(x)\}.
\end{equation*}
\subsection{Stream function, velocity potential and  hodograph transform}

We introduce the \emph{stream function} $\psi(x,y)$, defined up to an additive constant by
\begin{align}\label{1.9}
\psi_y=u-c,\qquad \psi_x=-v\quad \text{for }-\infty<y<\eta(x).
\end{align}
From $\eqref{1.4}$-$\eqref{1.5}$, we can infer that stream function $\psi$ is harmonic in the fluid domain $D$. Furthermore, $\psi$ is a constant on the free surface $y=\eta(x)$. Without loss of generality, we assume $\psi=0$ on $y=\eta(x)$.
Then $\eqref{1.8}$ can be rewritten as
\begin{equation}\label{1.10}
\begin{cases}
\psi_y \psi_{xy} -\psi_x\psi_{yy} = - P_x \quad -\infty <y < \eta(x), \\
-\psi_y  v_{xx} +\psi_x \psi_{xy} = -P_y-g \quad -\infty <y < \eta(x),\\
\triangle \psi=0 \quad \text{for   } -\infty <y <\eta(x),\\
\psi=0 \quad\text{on   } y=\eta(x),\\
P= P_{atm} \quad \text{on    } y = \eta(x),\\
\nabla \psi(x,y)\rightarrow (0,-c)\quad as \quad y\rightarrow -\infty \quad \text{uniformly for} (x,t)\in {\mathbb R}^2.
\end{cases}
\end{equation}
Indeed, the stream function $\psi$ has the properties of periodicity and symmetry(see\cite{r7}), that is
\begin{equation*}
\psi(x+\lambda,y)=\psi(x,y),\quad \psi(x,y)=\psi(-x,y)\quad \text{for } -\infty <y<\eta(x).
\end{equation*}
Moreover, taking advantage of $\eqref{1.9}$, we obtain
\begin{equation}\label{1.11}
\begin{cases}
u(x,y)=u(-x,y) \quad \text{for } -\infty <y\leq\eta(x),\\
v(x,y)=-v(-x,y)\quad \text{for } -\infty <y\leq\eta(x),
\end{cases}
\end{equation}
which shows that the horizonal velocity $u$ is an even function, and $v$ is an odd function in the $x$-variable. Moreover, the above argument expresses the fact that these properties have to hold true if the traveling wave profile is symmetric. By $\eqref{1.11}$, we obtain $v(\cdot,y)=0$. Furthermore, due to the periodicity of $v$, we have $v(\pm \lambda/2,y)=0$. $\eqref{1.6}$ and $\eqref{1.8}$ together with the monotonicity of $\eta$, then we have
\begin{equation}\label{uj}
v(\cdot,y)\leq0 \text{ for }x\in[-\lambda/2,0]\text{ and } v(\cdot,y)\geq0 \text{ for }x\in(0,\lambda/2].
\end{equation}
We now define the \emph{velocity potential} $\varphi(x,y)$ as the harmonic conjugate of $\psi(x,y)$, defined up to an additive constant by
\begin{equation*}
\varphi_x=\psi_y,\quad \varphi_y=-\psi_x,\quad -\infty <y<\eta(x).
\end{equation*}
According to Lagrange Mean-Value theorem, for some $\delta_{x,y}\in(x,x+\lambda)$, we find that the formula holds true throughout the whole fluid domain $D$, that is
\begin{equation*}
\varphi(x+\lambda,y)-\varphi(x,y)=\lambda\varphi_x(\delta_{x,y}),y)=\lambda(u(\delta_{x,y},y)-c)<0.
\end{equation*}
Therefore, $\varphi$ is not a periodic function in the $x$-variable. Due to the periodicity of $\psi$, we consider the periodic domain
\begin{equation*}
D_{\lambda}=\{(x,y)\in\mathbb R^2: -\lambda/2<x < \lambda/2, -\infty<y<\eta(x)\}.
\end{equation*}
Since $u(x,y)<c$ holds true throughout the closure of the fluid domain $D$,  the function $x\mapsto \varphi(x,y)$ attains its maximum $\varphi_{\max}$ at $x=-\lambda/2$, and the minimum $\varphi_{\min}$ of which is attained at $x=\lambda/2$. The function $\varphi$ is harmonic in the fluid domain $D$, implying the fact that $\varphi_{\max}$ and $\varphi_{\min}$ are equal to a constant at the boundary. Without loss of generality, we let $\varphi_{\min}=0$. We consider the hodograph transform $h(z)=\varphi(x,y)+i\psi(x,y)$ with $z=x+iy$ is analytic in the interior of fluid domain $D$. $h'(z)=u-c+i(-v)$ is analytic in $D$, and by $\eqref{1.7}$ we can deduce
\begin{equation*}
\underset{y\rightarrow -\infty}\lim \underset{x\in\mathbb R} \sup|h'(x,y)|=|c|.
\end{equation*}
Due to the periodicity of $\psi$ in the $x$-variable, we obtain
\begin{equation*}
h(z+\lambda)-h(z)=\varphi(x+\lambda,y)-\varphi(x,y),
\end{equation*}
which is a constant.
Let $y\rightarrow -\infty$, in light of $\eqref{1.7}$, we have
\begin{equation*}
h(z+\lambda)-h(z)=\int_{z}^{z+\lambda} h'(w) \,\ud w=\int_{z}^{z+\lambda} (u-c+i(-v)) \,\ud w\rightarrow -c\lambda.
\end{equation*}
Therefore, we find that
\begin{equation*}
\varphi(x+\lambda,y)-\varphi(x,y)=-c\lambda,
\end{equation*}
which holds true for any $z=x+iy$ in the fluid domain $D$.
Then we have
\begin{equation*}
\varphi_{\min}-\varphi_{\max}=-\varphi_{\max}=\int_{-\frac{L}{2}}^{\frac{L}{2}}\varphi_x\,\ud x=-c\lambda.
\end{equation*}
Therefore, $\varphi_{\max}=c\lambda$.
From $\eqref{1.6}$, we obtain that the minimum of $\psi$ is attained at the free surface $z=\eta(x)$, and is equal to zero. Furthermore, we derive that
\begin{equation*}
-\psi_{\max}=\int_{-\infty}^{\eta(x)}\psi_y(x,y)\,\ud y=-\infty,
\end{equation*}
that is $\psi_{\max}=+\infty$.
Notice that the hodograph transform $h=\varphi+i\psi$ is a biholomorphic function mapping the periodic fluid domain $D_{\lambda}$ onto the open half rectangle domain
\begin{equation*}
\Omega_{\lambda}=\{(q,p)\in{\mathbb R^2}: 0 <q <c\lambda, 0 < p < +\infty\}.
\end{equation*}

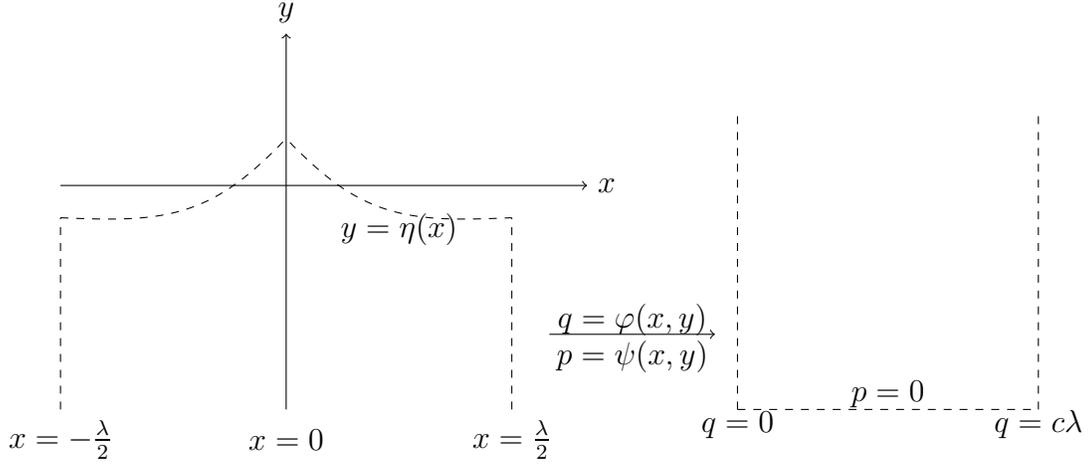
\begin{figure}[H]
\centering
\begin{tikzpicture}
    \draw[->] (-3,0.98) -- (4,0.98) node[right] {$x$}; 
    \draw[->] (0,-2) -- (0,3) node[above] {$y$}; 

    \draw[domain=-3:3,smooth,variable=\x,black,samples=100,dashed] plot ({\x},{(exp(-abs(\x))*cos(deg(-abs(\x))))+0.6});

    \draw [dashed](-3,-2) --(-3,{(exp(-abs(-3))*cos(deg(-abs(-3))))+0.6});
    \draw [dashed](3,-2) -- (3,{(exp(-abs(3))*cos(deg(-abs(3))))+0.6});
    \node at (1.5,0.4) {$y = \eta(x)$};
    \node at (-3,-2.4) {$x=-\frac{\lambda}{2}$};
    \node at (3,-2.4) {$x=\frac{\lambda}{2}$};
    \node at (0,-2.4) {$x=0$};

    \draw [dashed](6,-2) -- (6,2); 
    \draw [dashed](10,-2) -- (10,2); 
    \draw[->] (3.5,-1) -- (5.7,-1);

    \draw [dashed](6,-2) -- (10,-2);
    \node at (6,-2.2) {$q=0$};
    \node at (10,-2.2) {$q=c\lambda$};
    \node at (8,-1.8) {$p=0$};
    \node at (4.6,-0.8) {$q=\varphi(x,y)$};
    \node at (4.6,-1.3) {$p=\psi(x,y)$};
\end{tikzpicture}
\caption{The conformal hodograph transform $h$ maps the fluid domain in the moving frame into an open half domain in the $\left(q, p\right)$-plane.}\label{f1}
\end{figure}

Considering the open half rectangle domain
\begin{equation*}
\Omega=\{(q,p): q\in \mathbb R, 0 < p < +\infty\},
\end{equation*}
and the conformal bijection $q+ip\mapsto x+iy$ from $\Omega$ to $D$, then there exists an inverse transform:
\begin{equation}\label{2.15}
\begin{cases}
x=x(q,p),\\
y=y(q,p),
\end{cases}
\end{equation}
and has the ``periodicity'' properties(see Figure \ref{f1})
\begin{equation}\label{2.17}
\begin{cases}
x(q+c\lambda,p)=x(q,p)-\lambda\quad \text{for} \quad(p,q)\in \Omega,\\
y(q+c\lambda,p)=y(q,p)\quad \text{for} \quad (p,q)\in \Omega,\\
\underset{p\rightarrow+\infty}\lim y(q,p)=-\infty\quad \text{ for }\quad q\in\mathbb R.
\end{cases}
\end{equation}
The aim of this paper is to investigate the kinetic energy  properties of irrotational deep-water Stokes waves. Taking advantage of complex analysis approach, we obtain several properties of streamlines and kinetic energy characteristics, including the integral means of the kinetic energy, the streamline time-period, the total kinetic energy of a fluid particle over a streamline time-period. The outline of this paper is as follows.

In Section \ref{sec2}, we derive properties of the integral means of kinetic energy. A crucial step in our method is to observe that the integral of kinetic energy can be expressed as an integral over a horizontal segment of the modulus of an appropriately defined analytic function. This allows us to apply Hardy's convexity theorem to a unit disk. We show the convexity and logarithmic convexity properties of the integral of kinetic energy, as well as its non-increasing nature. In Ref.\cite{r8}, the kinetic energy index $s\in[1,+\infty)$, we extend the kinetic energy index to $s\in (-\infty,-1/2]\cup [1/2,+\infty)$, which can provide theoretical basis for numerical simulations of the kinetic energy for irrotational deep-water waves.

In Section \ref{sec3}, we derive qualitative results about the streamline time-period, which refers to  a particle's time taken to repeat its trajectory. We establish that streamline time-period is independent of initial data based on ideas from Ref.\cite{r4}. Additionally, we show that the streamline time-period depends solely on the streamline. According to Theorem $\ref{Theorem3.1}$, we prove convexity and logarithmic convexity properties of the streamline time-period for a fluid particle, as well as its non-increasing nature. Moreover, taking advantage of the properties of deep-water Stokes waves, we provide upper and lower bounds of the streamline time-period. We find that the streamline time-period is strictly larger than the wave period. Furthermore, there are no closed paths for all fluid particles, and the drift of any streamline is positive.

In Section \ref{sec4}, we obtain properties of the total kinetic energy and the total kinetic energy passing to moving frame for a fluid particle over a streamline time-period. We derive that the total kinetic energy over a streamline time-period is independent of initial location, then we prove the convexity and logarithmic convexity properties of total kinetic energy over a streamline time-period, as well as its non-increasing nature, and total kinetic energy passing to the moving frame is equal to a constant. Moreover, we provide a better understanding of the kinetic energy for irrotational deep-water waves.

\section{The integral means of kinetic energy}\label{sec2}
In this section, we investigate some qualitative results about integral means of kinetic energy passing to the moving frame. Let $E=((u-c)^2+v^2)/2$ be the kinetic energy at different depths passing to the moving frame. Moreover, since $h(x,y)=(q,p)$, then $(x,y)=h^{-1}(q,p)$. Define
\begin{align}\label{3.1}
\mu_s(E,p) = \frac{1}{c\lambda} \int_0^{c\lambda} E(h^{-1}(q,p))^s\,\ud q.
\end{align}
\begin{theorem}\label{Theorem3.1}
Suppose $p>0$ and $s\in{(-\infty,-1/2]\cup[1/2,+\infty)}$. The function $\mu_s(E,p)$ is convex and non-increasing, and $\log\mu_s(E,p)$ is a convex function.
\end{theorem}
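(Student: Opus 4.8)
The plan is to reinterpret the real integral in \eqref{3.1} as an integral mean of the modulus of an analytic function and then invoke Hardy's convexity theorem. The starting point is the identity $|h'(z)|^2 = (u-c)^2 + v^2 = 2E$, so that setting $G(w) = h'(h^{-1}(w))$ for $w = q + ip$ gives an analytic function on the half-strip with $E(h^{-1}(q,p)) = \tfrac12|G(q+ip)|^2$, and hence $E(h^{-1}(q,p))^s = 2^{-s}|G(q+ip)|^{2s}$. Because $u<c$ throughout the closed fluid domain by \eqref{a1}, the real part of $h'$ is strictly negative, so $h'$ — and therefore $G$ — is zero-free; this is what will let me treat negative exponents. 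The periodicity relations \eqref{2.17} show that $G(w+c\lambda)=G(w)$, so $G$ descends to a single-valued analytic function on the periodic strip $\Omega_\lambda$.

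Next I would uniformize $\Omega_\lambda$ to the unit disk. The map $\zeta = \exp(2\pi i w/(c\lambda))$ sends $\Omega_\lambda$ onto the punctured disk $0 < |\zeta| < 1$, carrying the horizontal segment $\{p=\text{const}\}$ to the circle of radius $r = \exp(-2\pi p/(c\lambda))$ and the argument $\theta = 2\pi q/(c\lambda)$ to the angular variable. By \eqref{1.7} one has $G(w)\to -c$ as $p\to+\infty$, i.e. as $\zeta\to 0$, so the induced function $\tilde G(\zeta)$ has a removable singularity at the origin and extends analytically (and without zeros) to all of $|\zeta|<1$. Changing variables in \eqref{3.1} then turns $\mu_s(E,p)$ into a constant multiple of the integral mean $m_{2s}(r) = \frac{1}{2\pi}\int_0^{2\pi}|\tilde G(re^{i\theta})|^{2s}\,\ud\theta$, with $r = \exp(-2\pi p/(c\lambda))$.

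Now Hardy's convexity theorem applies. For $s \ge 1/2$ the exponent satisfies $2s \ge 1$, so the theorem gives directly that $m_{2s}(r)$ is non-decreasing in $r$ and that $\log m_{2s}(r)$ is convex in $\log r$. For $s \le -1/2$ I would instead apply the theorem to the analytic, zero-free function $1/\tilde G$, using $|\tilde G|^{2s} = |1/\tilde G|^{-2s}$ with $-2s \ge 1$; this is exactly where the requirement $|s|\ge 1/2$, i.e. an exponent of modulus at least $1$, and the nonvanishing of $h'$ enter. Since $\log r = -\tfrac{2\pi}{c\lambda}p$ is an affine, decreasing function of $p$, convexity in $\log r$ transfers to convexity in $p$, and monotonicity in $r$ becomes the non-increasing property in $p$. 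Thus $\log\mu_s(E,p)$ is convex in $p$ and $\mu_s(E,p)$ is non-increasing; finally, convexity of $\mu_s$ itself follows because $\mu_s = \exp(\log\mu_s)$ and the exponential of a convex function is convex.

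The main obstacle is the justification of the passage to Hardy's theorem rather than the theorem itself: one must verify that $\tilde G$ (respectively $1/\tilde G$) is genuinely analytic on the full open disk and lies in the Hardy class to which the convexity theorem applies, which rests on the removable-singularity argument at $\zeta = 0$ together with the boundedness and nonvanishing of $h'$ inherited from \eqref{a1} and \eqref{1.7}. The split into the two sign regimes for $s$, and the use of zero-freeness to define and control the negative power, is the delicate book-keeping that the condition $|s|\ge 1/2$ is designed to make clean.
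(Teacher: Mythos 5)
Your proposal follows essentially the same route as the paper: the identity $2E=|h'|^2$, the exponential uniformization $\zeta=\ep^{ik(q+ip)}$ of the periodic strip onto the unit disk (with the singularity at $\zeta=0$ handled via \eqref{1.7}), Hardy's convexity theorem applied to $G=h'\circ h^{-1}$ for $s\ge 1/2$ and to its reciprocal (equivalently $(h^{-1})'$, which is where \eqref{a1} enters) for $s\le -1/2$, and the affine substitution $\log r=-2\pi p/(c\lambda)$. Your final transfer of convexity and monotonicity (composition with an affine map, exponential of a convex function) is if anything cleaner than the paper's explicit derivative computations, but the underlying argument is the same.
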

\begin{proof}
Let $R_0=(0,c\lambda)\times(0,+\infty)$, and $k=2\pi/{c\lambda}$. From $\eqref{1.10}$, then $h=q+ip=\varphi+i\psi$ is analytic and smooth in the interior of $D$, and maps the fluid domain $D$ onto the open half rectangle domain $\Omega$. The function $h'=\varphi_x+i\psi_x=u-c+i(-v)$ is analytic and periodic, and $|h'(h^{-1}(q,p))|^2=2E(h^{-1}(q,p))$. Owing to the periodicity of $h'$, we can transfer our analysis from the periodic fluid field $D_{\lambda}$ to the periodic rectangle domain $\Omega_{\lambda}$. Considering the conformal diffeomorphism:
$q+ip \mapsto \mathrm{e}^{ik(q+ip)},$
which maps the periodic rectangle domain $\Omega_{\lambda}$ onto a unit disk
\begin{align*}
\mathcal{S} = \{z \in \mathbb{C}:  |z| < 1\}.
\end{align*}
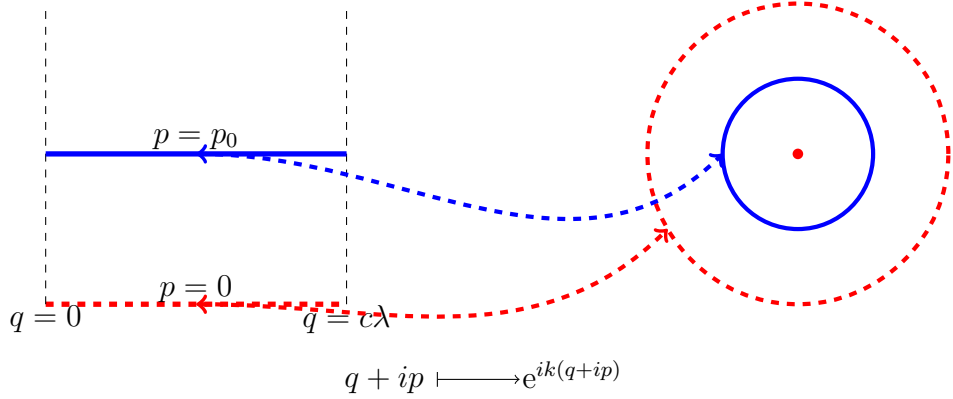
\begin{figure}[H]
\centering
\begin{tikzpicture}
    \draw[line width=2pt, red,dashed] (-10,-2) -- (-6,-2);

    \draw [dashed](-10,-2) -- (-10,2);
    \draw [dashed](-6,-2) -- (-6,2);
    \draw[line width=2pt, blue] (-10,0) -- (-6,0);
    \node at (-10,-2.2) {$q=0$};
    \node at (-6,-2.2) {$q=c\lambda$};
    \node at (-8,-1.8) {$p=0$};
    \node at (-8,0.2) {$p=p_0$};
    \node at (-5.5,-3) {$q+ip$};
    \node at (-3,-2.93) {$\mathrm{e}^{ik(q+ip)}$};
    \fill[red] (0,0) circle (2pt);
    \draw [dashed,red, ultra thick](0,0) circle (2);
    \draw [blue, ultra thick](0,0) circle (1);
    \draw[<->,blue,ultra thick,dashed] (-8,0) to[out=0,in=225] (-1,0);
    \draw[<->,red,ultra thick,dashed] (-8,-2) to[out=0,in=225] (-1.732,-1);
    \draw[|->] (-4.8,-3) -- (-3.7,-3);
\end{tikzpicture}

\caption{The conformal diffeomorphism maps the periodic rectangle domain onto the unit disk.}
\end{figure}

Note that there exists an inverse map: $z\mapsto-\frac{i}{k}\log z$. We consider the map $G$: $\mathcal S \rightarrow \mathbb {C}$, given by
\begin{align}\label{DF}
G(z)=(h'\circ h^{-1})(-\frac{i}{k}\log z),
\end{align}
which can extend to a continuous function on $\mathcal{S}$. The Morera's theorem ensures that the function $G(z)$ is analytic in the interior of $\mathcal S$.
Setting $G\in H^{2s}(\mathcal S)$, where $H^{2s}(\mathcal S)$ is the Hardy space, then
\begin{align*}
M_{2s}^{2s}(G,r) &= \frac{1}{2\pi}\int_0^{2\pi} |G(r\mathrm{e}^{i\theta})|^{2s} \,\mathrm{d}\theta  \\
&= \frac{1}{2\pi}\int_0^{2\pi} |(h'\circ h^{-1})\left(-\frac{i}{k}\log(r\mathrm{e}^{i\theta})\right)|^{2s} \,\ud \theta  \\
&= \frac{1}{2\pi}\int_0^{2\pi} \left|(h'\circ h^{-1})\left(\frac{\theta}{k}-i\frac{\log r}{k}\right)\right|^{2s} \,\ud\theta  \\
&= \frac{1}{2\pi}\int_0^{2\pi} \left|(h'\circ h^{-1})\left(\frac{\theta}{k},-\frac{\log r}{k}\right)\right|^{2s} \,\ud \theta.
\end{align*}
Substituting $\gamma=\frac{\theta}{k}$, we have
\begin{align*}
M_{2s}^{2s}(G,r)&=\frac{k}{2\pi}\int_0^{\frac{2\pi}{k}} \left|h'(h^{-1}(\gamma,-\frac{\log r}{k}))\right|^{2s} \,\ud \gamma\\
&=\frac{2^s}{c\lambda}\int_0^{c\lambda}\left |E(h^{-1}(\gamma,-\frac{\log r}{k}))\right|^s \,\ud \gamma\\
&=2^{s}\mu_{s}(E,-\frac{\log r}{k}).
\end{align*}
Therefore, let $r=\mathrm{e}^{-kp}$, we obtain
\begin{align}\label{3.2}
\mu_s(E,p)=\frac{1}{2^s} M_{2s}^{2s}(G,\mathrm{e}^{-kp}).
\end{align}
Furthermore,
\begin{equation}\label{W}
\log \mu_s(E,p)=2s\log M_{2s}(G,\mathrm{e}^{-kp})-s\log2.
\end{equation}
According to the Hardy's convexity theorem for analytic function in the unit disk (see Theorem 1.5 in \cite{r1}), for any $2s\geq0$ and
$r<1$, $M_{2s}(G,r)$ is a non-decreasing function of $r$, and $\log M_{2s}(G,r)$ is a convex function of $\log r$, i.e. for any $r_1$, $r_2$ and every $\alpha \in [0,1]$, we have
\begin{align*}
M_{2s}(G,r) \leq [M_{2s}(G,r_1)]^{\alpha} [M_{2s}(G,r_2)]^{1-\alpha},
\end{align*}
where $\log r =\alpha\log r_1 +(1-\alpha)\log r_2$. By $\eqref{W}$, $\log \mu_s(E,p)$ is a convex function. We let
\begin{align*}
\sigma(r)=\ln M_{2s}(G,r)\Rightarrow M_{2s}(G,r)=\mathrm{e}^{\sigma(r)}.
\end{align*}
Since $\sigma(r)$ is a convex function, then
\begin{align}\label{3.4}
\frac{\ud^2}{\ud r^2}M_{2s}(G,r)=\mathrm{e}^{\sigma(r)}(\sigma'^2(r)+\sigma''(r))\geq 0.
\end{align}
Therefore, the convexity of the map $r \mapsto \log M_{2s}(G,r)$ implies the convexity of the map $r \mapsto M_{2s}(G,r)$.
Moreover, since the function $G(z)$ is analytic on $\mathcal S$, then
\begin{equation*}
G(z)=\sum_{n=0}^{\infty}a_n z^n, \quad |z|<1.
\end{equation*}
Furthermore, we get
\begin{equation*}
M_{2s}^{2s}(G,r)=\sum_{n=0}^{\infty}|a_n|^{2s}r^{2sn},\quad r<1.
\end{equation*}
Next, taking advantage of the convex and non-decreasing properties of the map $r\mapsto M_{2s}(G,r)$, we will show that the map $p\mapsto M_{2s}^{2s}(G,\mathrm{e}^{-kp})$ is convex and non-increasing.
 Let $\mathcal F(r)=M_{2s}(G,r)$, then we obtain
\begin{align*}
\frac{\ud}{\ud p}\mathcal F(\mathrm{e}^{-kp})=-k\mathrm{e}^{-kp}\mathcal F'(\mathrm{e}^{-kp})\leq0
\end{align*}
and
\begin{align*}
\frac{\ud^2}{\ud p^2}\mathcal F(\mathrm{e}^{-kp})=k^2\mathrm{e}^{-kp}\left( \mathrm{e}^{-kp}\mathcal F''(\mathrm{e}^{-kp})+\mathcal F'(\mathrm{e}^{-kp})\right)\geq 0.
\end{align*}
Therefore, we find that the convexity of the map $r \mapsto \ M_{2s}(G,r)$ implies the convexity of the map $p \mapsto M_{2s}(G,\mathrm{e}^{-kp})$, and the map $p\mapsto M_{2s}(G,\mathrm{e}^{-kp})$ is non-increasing. Note that the map $r\mapsto\ M_{2s}^{2s}(G,\mathrm{e}^{-kp})$ is convex and non-increasing for $s\geq\frac{1}{2}$, that is
\begin{align*}
\frac{\ud}{\ud p}M_{2s}^{2s}(G,\mathrm{e}^{-kp})=\frac{\ud}{\ud p}\mathcal F^{2s}(\mathrm{e}^{-kp})=-2ks\mathrm{e}^{-kp}\mathcal F'(\mathrm{e}^{-kp})\mathcal F^{2s-1}(\mathrm{e}^{-kp})\leq 0
\end{align*}
and
\begin{equation*}
\begin{split}
     \frac{\ud^2}{\ud p^2}M_{2s}^{2s}(G,\mathrm{e}^{-kp})&=\frac{\ud^2}{\ud p^2}\mathcal F^{2s}(\mathrm{e}^{-kp})=2sk^2\mathrm{e}^{-kp} \left[ \mathrm{e}^{-kp}\big((2s-1)\mathcal F^{2s-2}(\mathrm{e}^{-kp})\mathcal F'^{2}(\mathrm{e}^{-kp}) \right. \\
    & \qquad \left. + \mathcal F^{2s-1}(\mathrm{e}^{-kp})\mathcal F''(\mathrm{e}^{-kp})\big)+\mathcal F'(\mathrm{e}^{-kp})\mathcal F^{2s-1}(\mathrm{e}^{-kp}) \right] \geq 0.
\end{split}
\end{equation*}
From $\eqref{3.2}$, for $ p\in(0,+\infty)$ and $s\geq\frac{1}{2}$, $p \mapsto \mu_{s}(E,p)$ is a non-increasing and convex function. By $\eqref{a1}$, then we have $h'\neq0$ on $D$.

For $s\leq-\frac{1}{2}$, we obtain
\begin{align}
\mu_s(E,p)&= \frac{1}{c\lambda} \int_0^{c\lambda} \left( E(h^{-1}(q,p))\right)^s\,\ud q \nonumber \\
&=\frac{1}{c\lambda} \int_0^{c\lambda} \left(\frac{1}{E(h^{-1}(q,p))}\right)^{-s}\,\ud q \nonumber \\
&=\frac{1}{2^s c\lambda} \int_0^{c\lambda} \left(\frac{1}{(u(h^{-1}(q,p))-c)^2+(v(h^{-1}(q,p)))^2}\right)^{-2s}\,\ud q \nonumber\\
&=\frac{1}{2^s c\lambda} \int_0^{c\lambda} \left|\frac{1}{h'(h^{-1}(q,p))}\right|^{-2s}\,\ud q \nonumber\\
&=\frac{1}{2^s c\lambda} \int_0^{c\lambda} |(h^{-1})'(q,p)|^{-2s}\,\ud q.
\end{align}
Since the fact that the function $h$ is a bijection and $h\neq0$, the derivative $h'$ of which  is analytic by relations $\eqref{1.4}$-$\eqref{1.5}$, $H(q,p)=(h^{-1})'(q,p))$ is analytic on $\Omega$. Consider that the map: $\mathcal S\rightarrow \mathbb C$, given by
\begin{equation*}
\mathcal H(z)=H(-\frac{i}{k}\log z).
\end{equation*}
According to the Morera's theorem, $\mathcal H(z)$ is analytic on $\mathcal S$. Furthermore, we find
\begin{align}\label{M}
\mu_s(E,-\frac{\log r}{k})&=\frac{1}{2^sc\lambda} \int_0^{c\lambda} |H(q,-\frac{\log r}{k})|^{-2s}\,\ud q\nonumber\\
&=\frac{1}{2\pi2^s}\int_0^{2\pi}|H(\frac{\theta}{k},-\frac{\log r}{k})|^{-2s}\,\ud \theta\nonumber\\
&=\frac{1}{2\pi2^s}\int_0^{2\pi}|H(\frac{\theta}{k}-i\frac{\log r}{k})|^{-2s}\,\ud \theta\nonumber\\
&=\frac{1}{2\pi2^s}\int_0^{2\pi}|H(-\frac{i}{k}\log(r\mathrm{e}^{i\theta}))|^{-2s}\,\ud \theta\nonumber\\
&=\frac{1}{2^s}{M}_{-2s}^{-2s}(\mathcal H,r).
\end{align}
Moreover, let $r=\mathrm{e}^{-kp}$, we find
\begin{equation*}
\mu_s(E,p)=\frac{1}{2^s}{M}_{-2s}^{-2s}(\mathcal H,\mathrm{e}^{-kp})
\end{equation*}
and
\begin{equation*}
\log\mu_s(E,p)=-2s\log{M}_{-2s}(\mathcal H,\mathrm{e}^{-kp})-s\log2.
\end{equation*}
Applying the Hardy's convexity theorem, for $-2s\geq 1$ and $r=\mathrm{e}^{-kp}<1$, $\log {M}_{-2s}(\mathcal H,r)$ is a convex function of $\log r$, and $ M_{-2s}(\mathcal H,r)$ is non-decreasing. The exact argument is analogous to the case of $s\geq\frac{1}{2}$. Therefore, it's easy find that $\log\mu_s(E,p)$ and $\mu_s(E,p)$ are convex function, which means the function
\begin{align}\label{3.7}
\frac{\ud}{\ud \, p}\mu_s(E,p)&=\frac{1}{2^sc\lambda} \int_0^{c\lambda} \frac{\ud}{\ud \,p}|H(q,p))|^{-2s}\,\ud q\nonumber\\
&=\frac{1}{2^sc\lambda} \int_0^{c\lambda} \frac{\ud}{\ud \,p}\left(\frac{1}{(u(h^{-1}(q,p)) - c)^2 + (v(h^{-1}(q,p)))^2 }\right)^{-2s} \,\ud q\nonumber\\.
\end{align}
is increasing.
Let $h^{-1}(q,p)=(x,y)$, according to the chain rule, we obtain
\begin{align}\label{3.8}
\frac{\partial}{\partial p}\{(u-c)^2+v^2\}=2(u-c)(u_x\frac{\partial x}{\partial p}+u_y\frac{\partial y}{\partial p})+2v(v_x\frac{\partial x}{\partial p}+v_y\frac{\partial y}{\partial p}).
\end{align}
For $p>0$, by $\eqref{1.7}$ and $\eqref{2.17}$, we have
\begin{align}\label{3.9}
\underset{p\rightarrow+\infty}\lim y(q,p)=-\infty
\end{align}
and
\begin{align}\label{3.10}
\underset{p\rightarrow+\infty}\lim v((x(q,p),y(q,p))=0.
\end{align}
Moreover, computing the inverse of the Jacobian matrix of $h$, we find
\begin{align}\label{3.11}
\underset{p\rightarrow+\infty}\lim\frac{\partial x}{\partial p}(q, p)=\underset{p\rightarrow+\infty}\lim-\frac{v}{(u-c)^2+v^2}(x(q,p),y(q,p))=0.
\end{align}
In light of $\eqref{3.7}$-$\eqref{3.11}$, we have
\begin{align*}
\underset{p\rightarrow+\infty}\lim \underset{q\in(0,c\lambda)}\sup\frac{\ud}{\ud \, p}\mu_s(E,p)=0.
\end{align*}
Therefore, for any $p>0$, then
\begin{align*}
\mu_s'(E,p)\leq 0.
\end{align*}
Consequently, the function $\mu_s(E,p)$ is non-increasing for $p\in(0, +\infty)$, which we complete the proof of the theorem.
\end{proof}
Indeed, by $\eqref{1.7}$ and $\eqref{2.17}$, we find
\begin{equation*}
\underset{p\rightarrow+\infty}\lim \underset{q\in(0,c\lambda)}\inf{\mu_s(E,p)}=\left(\frac{\sqrt{2}c}{2}\right)^{2s}.
\end{equation*}
\begin{theorem}\label{Theorem2.1}
For $s>0$ and $p>0$, the function $[\mu_s(E,p)]^{1/s}$ is convex and non-increasing, and $\log\mu_s(E,p)$ is convex.
\end{theorem}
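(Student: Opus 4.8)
The plan is to reuse the disk-model machinery already built in the proof of Theorem \ref{Theorem3.1}, observing that the restriction $s\ge 1/2$ there was needed \emph{only} to take the $2s$-th power of a convex function, and that this obstruction disappears once one passes to logarithms. First I would record the identity established in Theorem \ref{Theorem3.1} (its derivation is a pure change of variables and uses no lower bound on $s$, so it is valid for every $s>0$): with $G(z)=(h'\circ h^{-1})(-\tfrac{i}{k}\log z)$ analytic on $\mathcal S$ and $k=2\pi/(c\lambda)$,
\[
\mu_s(E,p)=\frac{1}{2^s}M_{2s}^{2s}(G,\mathrm{e}^{-kp}).
\]
Raising to the power $1/s$ and taking logarithms gives the two clean reformulations $[\mu_s(E,p)]^{1/s}=\tfrac12 M_{2s}^{2}(G,\mathrm{e}^{-kp})$ and $\log\mu_s(E,p)=-s\log 2+2s\log M_{2s}(G,\mathrm{e}^{-kp})$, which reduce everything to properties of the single integral mean $M_{2s}(G,\cdot)$.

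Next I would invoke Hardy's convexity theorem in the form genuinely valid for \emph{all} orders $2s>0$: the map $r\mapsto M_{2s}(G,r)$ is non-decreasing, and $\log M_{2s}(G,r)$ is a convex function of $\log r$. Setting $\Phi(p):=\log M_{2s}(G,\mathrm{e}^{-kp})$, the substitution $\log r=-kp$ is affine in $p$, so $\Phi$ is the composition of a convex function with an affine map and is therefore convex in $p$; and since $r\mapsto M_{2s}(G,r)$ is non-decreasing while $p\mapsto\mathrm{e}^{-kp}$ is decreasing, $\Phi$ is also non-increasing. The convexity of $\log\mu_s(E,p)=-s\log2+2s\,\Phi(p)$ is then immediate because $2s>0$.

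For $[\mu_s(E,p)]^{1/s}=\tfrac12\,\mathrm{e}^{2\Phi(p)}$, I would use the elementary fact that the exponential of a convex function is convex (if $\phi''\ge0$ then $(\mathrm{e}^{\phi})''=\mathrm{e}^{\phi}(\phi'^2+\phi'')\ge0$); applied to $\phi=2\Phi$, this yields convexity of $[\mu_s(E,p)]^{1/s}$, and the non-increasing property follows at once since $\Phi$ is non-increasing and the exponential is monotone. I expect the only conceptual obstacle to be exactly the point that distinguishes this theorem from Theorem \ref{Theorem3.1}: for $0<s<1/2$ the power map $x\mapsto x^{2s}$ is concave, so one cannot deduce convexity of $\mu_s$ by taking the $2s$-th power of the convex function $r\mapsto M_{2s}(G,r)$, as was done for $s\ge 1/2$. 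Routing the argument through $\Phi=\log M_{2s}$ and the ``exponential of a convex function is convex'' principle sidesteps this and is precisely why the statement is phrased in terms of $[\mu_s]^{1/s}$ and $\log\mu_s$ rather than $\mu_s$ itself; the monotonicity and Hardy convexity of $M_{2s}$ remain the only analytic inputs and hold uniformly for all $s>0$.
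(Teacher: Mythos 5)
Your proposal is correct and rests on exactly the same pillars as the paper's proof: the identity $[\mu_s(E,p)]^{1/s}=\tfrac12 M_{2s}^{2}(G,\mathrm{e}^{-kp})$ and $\log\mu_s(E,p)=2s\log M_{2s}(G,\mathrm{e}^{-kp})-s\log 2$ coming from \eqref{3.2}, followed by Hardy's convexity theorem for $M_{2s}(G,\cdot)$. The one place where you genuinely diverge is the convexity step, and your version is the more careful one. The paper (here and in Theorem \ref{Theorem3.1}) passes through the claim that $r\mapsto M_{2s}(G,r)$ is convex \emph{in $r$} and then applies the chain rule to $\mathcal F(\mathrm{e}^{-kp})$, needing both $\mathcal F'\ge 0$ and $\mathcal F''\ge 0$; but Duren's Theorem 1.5 only gives convexity of $\log M_{2s}(G,r)$ as a function of $\log r$, and a non-decreasing function that is convex in $\log r$ need not be convex in $r$. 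Your route --- set $\Phi(p)=\log M_{2s}(G,\mathrm{e}^{-kp})$, note that $\log r=-kp$ is affine so $\Phi$ is convex and (by monotonicity of $M_{2s}$ in $r$) non-increasing in $p$, then use that the exponential of a convex function is convex to get $[\mu_s]^{1/s}=\tfrac12\mathrm{e}^{2\Phi(p)}$ convex and non-increasing --- uses only what Hardy's theorem actually provides and reaches the same conclusions. Your closing observation is also on point: for $0<s<1/2$ one cannot raise to the power $2s$ and preserve convexity, which is precisely why this theorem is stated for $[\mu_s]^{1/s}$ and $\log\mu_s$ rather than for $\mu_s$ itself as in Theorem \ref{Theorem3.1}.
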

\begin{proof}
The argument of this theorem is very similar to Theorem $\ref{Theorem3.1}$. Recall that the hodograph transform $h=q+ip$ is analytic on $D$, the derivative $h'$ of which is analytic and periodic, and $h'=u-c-iv$. From $\eqref{3.2}$, we find
\begin{equation}\label{a}
[\mu_s(E,p)]^{1/s}=\frac{1}{2} M_{2s}^2(G,\mathrm{e}^{-kp})
\end{equation}
and
\begin{equation}\label{b}
\log\mu_s(E,p)=2s\log M_{2s}(G,\mathrm{e}^{-kp})-s\log2.
\end{equation}
Let $r=\mathrm{e}^{-kp}$, the Hardy's convexity theorem for analytic function in a unit disk ensures that the function $M_{2s}(G,r)$ is non-decreasing and convex, and $\log M_{2s}(G,r)$ is convex. Moreover, let $\mathcal F(r)=M_{2s}(G,r)$, then we obtain
\begin{equation}\label{c}
\frac{\ud}{\ud p}\frac{1}{2} M_{2s}^2(G,\mathrm{e}^{-kp})=\frac{\ud}{\ud p}\frac{1}{2} \mathcal F^2(\mathrm{e}^{-kp})=-k\mathrm{e}^{-kp} \mathcal F'(\mathrm{e}^{-kp})\leq 0
\end{equation}
and
\begin{equation}\label{d}
\frac{\ud^2}{\ud p^2}\frac{1}{2} M_{2s}^2(G,\mathrm{e}^{-kp})=k^2\mathrm{e}^{-kp}(\mathrm{e}^{-kp} \mathcal F''(\mathrm{e}^{-kp})+ \mathcal F'(\mathrm{e}^{-kp})\geq0.
\end{equation}
In light of $\eqref{a}$-$\eqref{d}$, the function $[\mu_s(E,p)]^{1/s}$ is convex and non-increasing, and $\log\mu_s(E,p)$ is convex.
\end{proof}
Furthermore, from $\eqref{1.7}$ and $\eqref{2.17}$, we find
\begin{equation*}
\underset{p\rightarrow+\infty}\lim \underset{q\in(0,c\lambda)}\inf{[\mu_s(E,p)]^{1/s}}=\frac{c^2}{2}.
\end{equation*}
The physical motivation of Theorem $\ref{Theorem3.1}$ and $\ref{Theorem3.1}$ provide an important understanding of the structure of the kinetic energy of deep-water waves. The exploitation and utilization of ocean wave energy is becoming more and more important because of its vast but yet somewhat untapped potential. Studying the convexity and logarithmic convexity properties of integral of kinetic energy can provide theoretical basis for numerical simulations of the kinetic energy for irrotational deep-water waves.
\section{ The periodicity of fluid particle's trajectory}\label{sec3}
In this section, we derive some properties of the streamline time-period, namely the elapsed time
per period of the streamline, which refers to the time it takes to traverse one period in the moving plane(see \cite{r2}). In fact, there are no closed particles in a deep-water Stokes wave(see Proposition 3.2 in \cite{r5}). The periodicity of the trajectory of any fluid particle is ensured by the properties of Stokes waves. A lot of experimental evidences show that for waves which are not near the breaking or spilling state the speed of any fluid particle is generally appreciably smaller than the wave propagation speed \cite{r3}. There are thus no stagnation points throughout the flow. The above argument suggests the streamline coincides with the trajectory of particle. Moreover, from $\eqref{2.15}$, we find that for any point $(x,y)$ in the fluid domain $D$ there is a unique point $(q,p)$ correspondence in the open half rectangle domain $\Omega$. Since $h=q+ip$ is a bijection, there is a smooth streamline $y=\xi_p(x)$ in the fluid domain $D$ only corresponding to a horizontal line $p$ in the open half rectangle domain $\Omega$, that is
\begin{align}
\psi(x, \xi_p(x))=p.
\end{align}
According to the implicit differentiation rule, by $\eqref{1.9}$, then we obtain
\begin{align}\label{4.2}
\psi_x+\psi_y \xi'_p(x)=0\Rightarrow \xi'_p(x)=-\frac{\psi_x}{\psi_y}=\frac{\varphi_y}{\varphi_x}=\frac{v}{u-c}.
\end{align}
In light of $\eqref{1.8}$ and $\eqref{uj}$, we have
\begin{align*}
\xi'_p(x)\geq0~~\text{for}~x\in [-L/2,0] \quad \text{and} \quad \xi_p'(x) \leq 0~~\text{for}~x\in [0,L/2].
\end{align*}
The motion of fluid particles follows the following differential systems:
\begin{equation}\label{4.4}
\begin{cases}
x'(t)=u(x(t)-ct,y(t)),\\
y'(t)=v(x(t)-ct,y(t)),
\end{cases}
\end{equation}
with initial data $(x(t_0),y(t_0))=(x_0,y_0)$. Due to the boundedness and smoothness of the velocity field $(u,v)$, there is a unique global solution $(x(t; x_0,t_0), y(t; x_0,t_0))$ that depends on initial data $(x_0,y_0)$ and $t$. Indeed, each particle describes the repeating pattern. Therefore for any particle path repeats the same trajectory over a time, that is the streamline time-period $\mathcal T$.
In the next section, we will show the existence of $\mathcal T$, and investigate whether it is independent of the initial data $(x_0,t_0)$.
\begin{theorem}\label{Theorem4.1}
Suppose $(q,p)\in(0,c\lambda)\times(0,+\infty)$ and $\lambda>0$. Passing to the moving frame, the equation
\begin{align*}
x(\mathcal T+t_0;t_0,x_0)-c\mathcal T=x_0-\lambda
\end{align*}
has a unique solution $\mathcal T(p)>0$ which depends solely on the streamline $p$ and is independent of initial data $(x_0,t_0)$.
Moreover, the function $\mathcal T(p)$  non-increasing and convex, and $\log \mathcal T(p)$ is a convex. For $p\in\mathbb R$, $\mathcal T(p)$ is an even function.
\end{theorem}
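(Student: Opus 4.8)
The plan is to pass to the moving frame, to recognise the particle motion there as an autonomous Hamiltonian flow whose Hamiltonian is the stream function, and then to reparametrise each trajectory by the velocity potential $q=\varphi$ so that the streamline time-period becomes an integral of $1/(2E)$ over one $q$-period. This representation turns every assertion in the statement into a consequence of Theorem \ref{Theorem3.1} at the index $s=-1$. Concretely, writing $X(t)=x(t)-ct$ for the horizontal position in the moving frame, the system \eqref{4.4} becomes autonomous,
\[
\dot X=u(X,y)-c=\psi_y(X,y),\qquad \dot y=v(X,y)=-\psi_x(X,y),
\]
so that $\tfrac{\ud}{\ud t}\psi(X(t),y(t))=0$ and the particle stays on the streamline $\psi\equiv p$. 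Since $q=\varphi(X,y)$, the Cauchy--Riemann relations give
\[
\dot q=\varphi_x\dot X+\varphi_y\dot y=(u-c)^2+v^2=|h'|^2=2E,
\]
and \eqref{a1} (that is, $u<c$) forces $h'\neq0$, hence $\dot q=2E>0$; meanwhile $\dot X=u-c<0$. Thus $t\mapsto q(t)$ is a strictly increasing bijection of $\mathbb R$ onto $\mathbb R$, while $t\mapsto X(t)$ is strictly decreasing.

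Next I would establish existence, uniqueness and independence of the initial data, together with the key integral formula. By the periodicity \eqref{2.17}, increasing $q$ by one period $c\lambda$ is the same as decreasing $X$ by $\lambda$; and the defining equation $x(\mathcal T+t_0;t_0,x_0)-c\mathcal T=x_0-\lambda$ is exactly $X(\mathcal T+t_0)=X(t_0)-\lambda$. Because $q$ is a strictly increasing bijection of time, there is a unique $\mathcal T>0$ for which $q$ has advanced by $c\lambda$. Inverting $\dot q=2E$ then yields
\[
\mathcal T(p)=\int_{q_0}^{q_0+c\lambda}\frac{\ud q}{2E(h^{-1}(q,p))}.
\]
The autonomy of the moving-frame system eliminates the dependence on $t_0$, and the $c\lambda$-periodicity of $q\mapsto E(h^{-1}(q,p))$ inherited from \eqref{2.17} makes the integral independent of the base point $q_0$, hence of $x_0$; therefore $\mathcal T$ depends only on the streamline $p$. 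Comparing with the definition \eqref{3.1} at $s=-1$ gives the central identity
\[
\mathcal T(p)=\frac{c\lambda}{2}\,\mu_{-1}(E,p).
\]

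The qualitative properties now follow at once. Since $s=-1\in(-\infty,-1/2]$, Theorem \ref{Theorem3.1} asserts that $\mu_{-1}(E,p)$ is convex and non-increasing and that $\log\mu_{-1}(E,p)$ is convex; multiplying by the positive constant $c\lambda/2$ preserves convexity and monotonicity, while $\log\mathcal T(p)=\log(c\lambda/2)+\log\mu_{-1}(E,p)$ remains convex. For the parity claim I would return to the disk picture from the proof of Theorem \ref{Theorem3.1}, writing $\mathcal T(p)=c\lambda\,M_{2}^{2}(\mathcal H,\mathrm e^{-kp})=c\lambda\sum_{n\ge0}|a_n|^2\mathrm e^{-2knp}$ with $\mathcal H$ analytic in $\mathcal S$; the Schwarz reflection of $\mathcal H$ across the unit circle $|z|=1$, which corresponds to the free surface $p=0$, leaves $M_{2}^{2}(\mathcal H,\mathrm e^{-kp})$ invariant under $p\mapsto-p$, so the natural extension of $\mathcal T$ to $p\in\mathbb R$ is even.

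I expect the main obstacles to be the two places where the flow, rather than Theorem \ref{Theorem3.1}, does the work: first, showing that $\dot q=2E$ is bounded below by a positive constant along a fixed streamline, so that $\mathcal T(p)$ is finite, which uses the strict inequality $u<c$ together with compactness of one period of the smooth, periodic streamline; and second, making the parity statement rigorous, where one must justify that reflecting the Hardy-space function $\mathcal H$ across $|z|=1$ is the correct way to continue $\mathcal T$ below the free surface. By contrast, the convexity, logarithmic convexity and monotonicity are immediate corollaries of the identity $\mathcal T(p)=\tfrac{c\lambda}{2}\mu_{-1}(E,p)$ and the already-proved Theorem \ref{Theorem3.1}.
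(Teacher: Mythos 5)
Your main argument is essentially the paper's. The paper establishes existence and uniqueness of $\mathcal T$ via the function $g(t)=x(t+t_0;t_0,x_0)-ct-x_0+\lambda$ with $g'(t)=u-c\leq-\delta_0<0$, then passes from $t$ to $\zeta(s)=x(s;t_0,x_0)-cs$ and from $x$ to $q=\varphi(x,\xi_p(x))$ in two separate substitutions, arriving at exactly your identity
\begin{equation*}
\mathcal T(p)=\int_0^{c\lambda}\frac{\ud q}{2E(h^{-1}(q,p))}=\frac{c\lambda}{2}\,\mu_{-1}(E,p),
\end{equation*}
after which convexity, monotonicity and logarithmic convexity are read off from Theorem \ref{Theorem3.1} at $s=-1$. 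Your single-step substitution $\dot q=\varphi_x\dot X+\varphi_y\dot y=(u-c)^2+v^2=2E>0$ is a cleaner route to the same formula, and your observations about autonomy (independence of $t_0$) and $c\lambda$-periodicity in $q$ (independence of $x_0$) match the paper's reasoning. Up to and including the application of Theorem \ref{Theorem3.1}, the proposal is correct and is the paper's proof in all but bookkeeping.

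The parity argument, however, contains a concrete error. You write $\mathcal T(p)=c\lambda\,M_2^2(\mathcal H,\mathrm e^{-kp})=c\lambda\sum_{n\geq0}|a_n|^2\mathrm e^{-2knp}$ and claim this is invariant under $p\mapsto-p$; but $\sum_n|a_n|^2\mathrm e^{-2knp}=\sum_n|a_n|^2\mathrm e^{+2knp}$ for all $p$ forces $a_n=0$ for every $n\geq1$, i.e.\ the claim is false unless $\mathcal H$ is constant (equivalently, unless the flow is a uniform stream). Schwarz reflection across $|z|=1$ does not rescue this: the reflected function $\tilde{\mathcal H}(z)=\overline{\mathcal H(1/\bar z)}$ satisfies $M_2(\tilde{\mathcal H},r)=M_2(\mathcal H,1/r)$, which merely restates the substitution $p\mapsto -p$ rather than proving symmetry, and the reflection principle is not even applicable here since $\mathcal H$ carries $|z|=1$ onto the (non-circular) image of the free surface. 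The paper argues evenness differently, by writing $\mathcal T(-p)=\int_0^{c\lambda}|h'(h^{-1}(q,-p))|^{-2}\,\ud q$ directly and asserting it equals $\mathcal T(p)$ --- an argument that itself presupposes an extension of $h^{-1}$ above the free surface and is stated without justification --- so this portion of the theorem is delicate in the original as well; but your specific mechanism (invariance of the power series under $r\mapsto1/r$) is demonstrably not the right one.
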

\begin{proof}
Define the function
\begin{align*}
g(t)=x(t+t_0;t_0,x_0)-ct-x_0+\lambda.
\end{align*}
Furthermore,  we obtain
\begin{align*}
g'(t)=u(x,y)-c\leq-\delta_0,\quad g(0)=\lambda>0,
\end{align*}
where $\delta_0 =\underset{(x,y)\in D}{\min} \{c-u(x,y)\} > 0$. According to the Lagrange Mean-Value theorem, we have
\begin{align*}
g(t)-g(0)=g'(\tau)t\Rightarrow g(t)=g'(\tau)t+\lambda\leq-\delta_0 t +\lambda\quad \text{ for some } \tau\in(0,t).
\end{align*}
Taking advantage of geometric properties, we find that there is $\mathcal T>0$ with $g(\mathcal T)=0$.
Consider that the function
\begin{align*}
\zeta(s)=x(s;t_0, x_0)-cs,
\end{align*}
then we obtain
\begin{align*}
\zeta'(s)=u(\zeta(s),\xi_p(\zeta(s))-c\text{,}\quad\zeta(t_0)=x_0.
\end{align*}
Moreover, we find
\begin{align}\label{B}
\zeta(\mathcal T+t_0)=x(\mathcal T+t_0;t_0, x_0)-c(\mathcal T+t_0)=x_0-ct_0-\lambda.
\end{align}
For $s\in[t_0,\mathcal T+t_0]$, we obtain
\begin{align*}
\mathcal T=\int_{t_0}^{\mathcal T+t_0}\frac{\zeta'(s)}{u(\zeta(s),\xi_p(\zeta(s)))-c}\,\ud s =\int_{\zeta(t_0)}^{\zeta(\mathcal T +t_0)}\frac{1}{u(\zeta(s),\xi_p(\zeta(s))-c}\,\ud \zeta(s).
\end{align*}
Moreover, by means of the periodicity of $u$, we have
\begin{align*}
\mathcal T=\int_{x_0-ct_0}^{x_0-ct_0-\lambda}\frac{1}{u(\zeta,\xi_p(\zeta))-c}\,\ud \zeta=\int_{-\frac{\lambda}{2}}^{\frac{\lambda}{2}}\frac{1}{c-u(x,\xi_p(x))}\,\ud x.
\end{align*}
Therefore, $\mathcal T$ depends only on the horizontal line $p$ in the open half rectangle domain $\Omega$, and is independent of initial data $(x_0,t_0)$.
Note that the function $h=q+ip=\varphi+i\psi$ is analytic on $D$, the derivative $h'$ of which is analytic on $D$.  By means of the hodograph transform $h$, we consider
\begin{align*}
q=q_p(x)=\varphi(x, \xi_p(x)).
\end{align*}
According to the implicit differentiation rule, then
\begin{align}\label{4.10}
q'_p(x)=\varphi_x+ \varphi_y \xi_p'(x).
\end{align}
In light of $\eqref{1.9}$ and $\eqref{4.2}$, we obtain
\begin{align}\label{4.11}
q'(x)=\frac{\varphi_x^2+\varphi_y^2}{\varphi_x}=\frac{(u-c)^2+v^2}{u-c}.
\end{align}
Since $E=((u-c)^2 +v^2)/2$, then we have
\begin{align}\label{4.12}
q'(x)=\frac{2E}{u(x,\xi_p(x))-c}.
\end{align}
For $x\in[-\lambda/2,\lambda/2]$, since $q(-\lambda/2)=c\lambda$ and $q(\lambda/2)=0$, in light of $\eqref{4.10}$-$\eqref{4.12}$ we obtain
\begin{align}\label{4.13}
\mathcal T(p)=\int_{-\frac{\lambda}{2}}^{\frac{\lambda}{2}}\frac{1}{c-u(x,f_p(x))}\,\ud x &=-\int_{-\frac{\lambda}{2}}^{\frac{\lambda}{2}}\frac{q'(x)}{2E(x,f_p(x))}\,\ud x
&=\int_{0}^{c\lambda}\frac{1}{2E(h^{-1}(q,p))}\,\ud q.
\end{align}
According to Theorem $\ref{Theorem3.1}$, for the index $s=-1$, we have
\begin{align*}
\mathcal T(p)=\frac{c\lambda}{2}\mu_{-1}(E,p).
\end{align*}
Therefore, $\mathcal T(p)$ is a convex and non-increasing function, and $\log \mathcal T(p)$ is convex.
For $p\in\mathbb R$, we find that
\begin{align*}
\mathcal T(-p)=\int_{0}^{c\lambda}\frac{1}{2E(h^{-1}(q,-p))}\,\ud q=\int_{0}^{c\lambda}\frac{1}{|h'(h^{-1}(q,-p))|^2}\,\ud q=\mathcal T(p),
\end{align*}
which implies $\mathcal T(p)$ is an even function.
\end{proof}
\begin{theorem}\label{Remark4.1}
The streamline time-period $\mathcal T(p)$ of the fluid particle path satisfies the inequality
\begin{equation*}
\frac{\lambda}{c} < \mathcal T(p)\leq\int_{-\frac{\lambda}{2}}^{\frac{\lambda}{2}}\frac{1}{c-u(x,\eta(x))}\,\ud x \quad \text{ for any }\quad 0\leq p<+\infty,
\end{equation*}
where $\lambda/c$ is the wave period. Moreover, there are no closed paths for all fluid particles and the drift of any streamline is positive, that is
\begin{equation*}
x(\mathcal T+t_0)-x(t_0)=c\mathcal T-\lambda>0.
\end{equation*}
\end{theorem}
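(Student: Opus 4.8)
The plan is to treat the two-sided bound and the drift statement as three independent pieces, all resting on the integral representation $\mathcal T(p)=\int_{-\lambda/2}^{\lambda/2}\frac{1}{c-u(x,\xi_p(x))}\,\ud x$ from Theorem \ref{Theorem4.1} and on the identity $\zeta(\mathcal T+t_0)=x_0-ct_0-\lambda$ recorded in \eqref{B}.

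For the upper bound I would simply invoke the monotonicity already proved. Since $\psi=0$ on the free surface, the streamline $p=0$ is exactly $y=\eta(x)$, so $\xi_0\equiv\eta$ and $\mathcal T(0)=\int_{-\lambda/2}^{\lambda/2}\frac{\ud x}{c-u(x,\eta(x))}$; this integrand is bounded because $c-u\ge\delta_0>0$ on $\overline{D}$. As $\mathcal T$ is non-increasing on $[0,+\infty)$ (Theorem \ref{Theorem4.1}, extended to $p=0$ by continuity of the velocity field up to the boundary), we get $\mathcal T(p)\le\mathcal T(0)$ for every $p\ge0$, which is the claimed right-hand inequality, with equality attained at the surface.

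For the lower bound the key is to manufacture an identity that forces a strict gap. Integrating \eqref{4.11}–\eqref{4.12} over one period and using $q(-\lambda/2)=c\lambda$, $q(\lambda/2)=0$ yields $\int_{-\lambda/2}^{\lambda/2}\frac{2E}{c-u}\,\ud x=c\lambda$; writing $2E=(c-u)^2+v^2$ this becomes $\int(c-u)\,\ud x=c\lambda-\int\frac{v^2}{c-u}\,\ud x$. Next I would apply Cauchy--Schwarz to $\lambda=\int\sqrt{c-u}\cdot(c-u)^{-1/2}\,\ud x$, giving $\lambda^2\le\big(\int(c-u)\,\ud x\big)\mathcal T(p)$. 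Because $v\not\equiv0$ on any streamline --- by \eqref{uj}, $v$ is negative on $(-\lambda/2,0)$ and positive on $(0,\lambda/2)$ --- the term $\int\frac{v^2}{c-u}\,\ud x$ is strictly positive, so $\int(c-u)\,\ud x<c\lambda$ and hence $\mathcal T(p)\ge\lambda^2/\int(c-u)\,\ud x>\lambda/c$. I expect this strict lower bound to be the main obstacle: one must spot the right identity coming from $\int q'\,\ud x=-c\lambda$, then combine it with Cauchy--Schwarz and the strict positivity of $v^2$. Note that the limiting value $\lim_{p\to+\infty}\mathcal T(p)=\lambda/c$ (consistent with the infimum computed just after Theorem \ref{Theorem3.1}, evaluated at $s=-1$) only delivers the non-strict inequality, so a genuinely separate argument for strictness is required.

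Finally, the drift follows at once. From \eqref{B} we read off $x(\mathcal T+t_0)=x_0-\lambda+c\mathcal T$, whence $x(\mathcal T+t_0)-x(t_0)=c\mathcal T-\lambda$; the strict lower bound just established gives $c\mathcal T>\lambda$, so $c\mathcal T-\lambda>0$. Thus, although in the moving frame the particle returns to the same point of the periodic streamline after time $\mathcal T$, in the fixed frame its horizontal coordinate advances by the positive amount $c\mathcal T-\lambda$. Consequently the orbit cannot close and every streamline carries a strictly positive forward drift.
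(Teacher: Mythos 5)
Your proposal is correct in outline and reaches the same conclusions, but the treatment of the strict lower bound is genuinely different from the paper's. For the upper bound and the drift you do exactly what the paper does: identify $p=0$ with the free surface, invoke the monotonicity of $\mathcal T$ from Theorem \ref{Theorem4.1}, and read the drift off the defining relation $x(\mathcal T+t_0;t_0,x_0)-c\mathcal T=x_0-\lambda$. For the lower bound the paper instead computes $\lim_{p\to+\infty}\mathcal T(p)=\lambda/c$ (using $u\to0$ as $p\to+\infty$) and declares this to be the infimum; as you rightly point out, monotonicity plus a limiting value only yields $\mathcal T(p)\ge\lambda/c$, so the paper's own route to \emph{strictness} is left implicit. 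Your substitute --- integrating $q'=-(c-u)-\tfrac{v^2}{c-u}$ over a period to get $\int_{-\lambda/2}^{\lambda/2}(c-u)\,\ud x=c\lambda-\int_{-\lambda/2}^{\lambda/2}\tfrac{v^2}{c-u}\,\ud x$ and then applying Cauchy--Schwarz to $\lambda=\int\sqrt{c-u}\,(c-u)^{-1/2}\,\ud x$ --- is a self-contained, pointwise-in-$p$ argument that delivers the strict inequality directly and quantifies the gap. This is a real improvement in rigor over the paper at that step, and it does not even need the limiting behaviour at $p=+\infty$.

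The one weak link is your justification that $\int_{-\lambda/2}^{\lambda/2}\tfrac{v^2}{c-u}\,\ud x>0$. You cite \eqref{uj}, but as stated in the paper that relation only gives $v\le0$ on $[-\lambda/2,0]$ and $v\ge0$ on $(0,\lambda/2]$, i.e.\ sign information with no strictness, so it does not by itself exclude $v\equiv0$ along a streamline. You need the additional (standard, but not proved in this paper) fact that for a genuine, non-flat Stokes wave $v<0$ in the interior of the quarter-period on every streamline --- equivalently, that no streamline is a horizontal line --- for instance via a maximum-principle or unique-continuation argument applied to the harmonic function $v=-\operatorname{Im}h'$ together with the strict monotonicity of $\eta$. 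Without some such input the strict inequality genuinely fails for the trivial flow $u=v=0$, where $\mathcal T=\lambda/c$ exactly, so this hypothesis is not cosmetic; state it and justify it, and your proof is complete.
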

\begin{proof}
For $(q,p)\in(0,c\lambda)\times[0,+\infty)$, we rewrite $\eqref{4.13}$ as
\begin{align*}
\mathcal T(p)=\int_{-\frac{\lambda}{2}}^{\frac{\lambda}{2}}\frac{1}{c-u(x,\xi_p(x))}\,\ud x.
\end{align*}
According to Theorem $\ref{Theorem4.1}$, $\mathcal T(p)$ is non-increasing for $p\in[0,+\infty)$. Since we assume that the velocity field $(u,v)$ is bounded and smooth in the fluid domain $D$,  the function $\mathcal T(p)$ is smooth and bounded by relation $\eqref{4.13}$. Therefore, the supremum of $\mathcal T(p)$ is attained at $p=0$, namely the free surface $y=\eta(x)$, and the infimum of $\mathcal T(p)$ is attained at $p=+\infty$. From $\eqref{1.10}$, then $p=0$ on the free surface $y=\eta(x)$. In light of $\eqref{1.7}$, $\eqref{1.8}$ and $\eqref{2.17}$, then $u\rightarrow 0$ as $p\rightarrow+\infty$. Furthermore, we have
\begin{align*}
\underset{(q,p)\in\Omega }\sup \mathcal T(p)=\underset{p\rightarrow 0 }\lim \mathcal T(p)=\int_{-\frac{\lambda}{2}}^{\frac{\lambda}{2}}\frac{1}{c-u(x,\eta(x))}\,\ud x,
\end{align*}
and
\begin{align*}
\underset{(q,p)\in\Omega }\inf \mathcal  T(p)=\underset{p\rightarrow +\infty }\lim \mathcal T(p)=\underset{p\rightarrow +\infty }\lim \int_{-\frac{\lambda}{2}}^{\frac{\lambda}{2}}\frac{1}{c-u(x,\xi_p(x))}\,\ud x=\frac{\lambda}{c}.
\end{align*}
For $0\leq p <+\infty$, then
\begin{align*}
\frac{\lambda}{c} < \mathcal T(p)\leq\int_{-\frac{\lambda}{2}}^{\frac{\lambda}{2}}\frac{1}{c-u(x,\eta(x))}\,\ud x.
\end{align*}
By the definition of $\mathcal T$, we have
\begin{equation*}
x(\mathcal T+t_0)-x(t_0)=c\mathcal T-\lambda>0.
\end{equation*}
According to Lemma 3.1 in \cite{r5}, we can infer that there are no closed paths for all fluid particles, and the drift of any streamline is positive.
\end{proof}
\begin{remark}
Theorem $\ref{Remark4.1}$ ensures any fluid particle's path is not closed in deep-water Stokes flow, which is identical to the result in \cite{r5}.  Moreover, there is  a closed particle path if and only if $\mathcal T=\lambda/c$.
\end{remark}
\section{The total kinetic energy of a fluid particle }\label{sec4}
In this section, we discuss some qualitative results about the total kinetic energy over a streamline time-period for a fluid particle.
The total kinetic energy of a fluid particle located initially at $(x_0,y_0)$ with $y_0=\xi_p(x_0)$ over a streamline time-period is given by
\begin{align*}
\mathcal E( p,x_0)=\frac{1}{2}\int_0^{\mathcal T(p)}[(x'(t;x_0))^2+(y'(t;x_0))^2]\,\ud t.
\end{align*}
Passing to the moving frame, the total kinetic energy of a fluid particle located initially at $(x_0,y_0)$ with $y_0=\xi_p(x_0)$  over a streamline time-period is given by
\begin{align*}
\mathbb E(p,x_0)=\frac{1}{2}\int_0^{\mathcal T(p)}[(x'(t;x_0)-c)^2+(y'(t;x_0))^2]\,\ud t.
\end{align*}
\begin{theorem}\label{Theorem5.1}
Suppose $t\in[0,\mathcal T(p)]$ and $(q,p)\in(0,c\lambda)\times(0,+\infty)$. The function $\mathcal E(p,x_0)$ is a convex and non-increasing, which  depends solely on the streamline $p=\psi$ and is independent of initial location $x_0$. Moreover, $\log \mathcal E(p,x_0)$ is a convex function. The function $\mathbb E(p,x_0)$ is equal to a constant $c\lambda/2$.
\end{theorem}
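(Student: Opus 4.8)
The plan is to reduce everything about $\mathcal{E}(p,x_0)$ and $\mathbb{E}(p,x_0)$ to the already-established integral-mean formulas, exactly as Theorem~\ref{Theorem4.1} reduced $\mathcal{T}(p)$ to $\mu_{-1}(E,p)$. The governing observation is that along the particle trajectory we may change variables from time $t$ to the spatial coordinate $x$ (or to $q$), because $\eqref{4.4}$ gives $\ud t = \ud x/(u-c)$ along a streamline. First I would rewrite the physical-frame integrand. Since the particle moves along the streamline $y=\xi_p(x)$, we have $x'(t)=u$ and $y'(t)=v$ in the physical frame, so $(x')^2+(y')^2=u^2+v^2$. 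The subtlety is that the total kinetic energy $\mathcal{E}$ uses the \emph{physical} velocities $(u,v)$, while the integral means $\mu_s(E,p)$ are built from the \emph{moving-frame} kinetic energy $E=((u-c)^2+v^2)/2$. So the two quantities are genuinely different, and the route to convexity must pass through $\mathbb{E}$ first.

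For $\mathbb{E}(p,x_0)$ the computation is clean: in the moving frame the integrand is $((u-c)^2+v^2)/2 = E$, and converting the time integral to a $q$-integral via $\ud t=\ud x/(u-c)$ together with $\eqref{4.11}$--$\eqref{4.12}$ (which give $\ud q = q'(x)\,\ud x = \tfrac{2E}{u-c}\,\ud x$) yields
\begin{align*}
\mathbb{E}(p,x_0)=\frac{1}{2}\int_0^{\mathcal T(p)} 2E\,\ud t
=\int_{-\frac{\lambda}{2}}^{\frac{\lambda}{2}} \frac{E}{c-u}\,\ud x
=\frac{1}{2}\int_0^{c\lambda} 1\,\ud q=\frac{c\lambda}{2},
\end{align*}
where the cancellation $E\cdot \ud t = E \cdot \ud x/(u-c) = \tfrac12\,\ud q$ is the key identity. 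This proves $\mathbb{E}$ is the constant $c\lambda/2$ and, as a bonus, shows it is independent of both $p$ and $x_0$. The change of variables also makes the independence from the initial location $x_0$ transparent, since after passing to the $q$-variable the limits become $0$ and $c\lambda$ regardless of where the particle started.

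For $\mathcal{E}(p,x_0)$ I would run the same change of variables but keep the physical integrand $u^2+v^2$. Writing $u^2+v^2=(u-c)^2+v^2+2c(u-c)+c^2 = 2E + 2c(u-c)+c^2$ and converting $\ud t = \ud x/(u-c)$, the cross term integrates to something proportional to $\int (u-c)\,\ud t=\int \ud x$ over a full period, i.e. a multiple of $\lambda$, the $2E$ term reproduces $2\mathbb{E}=c\lambda$, and the $c^2$ term becomes $c^2\mathcal{T}(p)$. Collecting these, $\mathcal{E}(p,x_0)$ becomes an affine function of $\mathcal{T}(p)$ plus constants independent of $x_0$; I expect the precise form to be $\mathcal{E}(p,x_0)=\tfrac{c^2}{2}\mathcal{T}(p) + (\text{const})$ after the linear-in-$u$ terms are handled, so that the independence from $x_0$ and the convexity, log-convexity, and non-increasing character all descend directly from the corresponding properties of $\mathcal{T}(p)$ established in Theorem~\ref{Theorem4.1}. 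The main obstacle will be bookkeeping the cross term $2c\int(u-c)\,\ud t$ correctly: one must verify it equals exactly $-2c\lambda/2$ (a constant independent of $p$ and $x_0$) using $\eqref{B}$ and the period-$\lambda$ drift $\int_{t_0}^{\mathcal T+t_0}(u-c)\,\ud t = x(\mathcal T+t_0)-x(t_0)-c\mathcal T = -\lambda$, so that the only $p$-dependent piece surviving is the $c^2\mathcal{T}(p)$ term; after that, convexity is immediate since $\mathcal{E}$ is an increasing affine function of the convex, log-convex, non-increasing $\mathcal{T}(p)$, and a positive affine image of a non-increasing convex function is again non-increasing and convex, while log-convexity follows because $\mathcal{E}$ is a positive scalar multiple of $\mathcal{T}(p)$ up to an additive constant that I must check preserves log-convexity (equivalently, I would confirm the additive constant vanishes, reducing $\mathcal{E}$ to a positive multiple of $\mathcal{T}(p)$).
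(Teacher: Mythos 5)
Most of your reduction is sound and, for part of the statement, more elementary than the paper's argument. Your change of variables $\ud t=\ud x/(u-c)$ followed by $\ud q=\tfrac{2E}{u-c}\,\ud x$ is exactly how the paper obtains independence of $x_0$ and the value $\mathbb E=c\lambda/2$ (the paper phrases the latter as ``set $c=0$'' in $\tfrac12\int_0^{c\lambda}|1+c(h^{-1})'|^2\,\ud q$, which is the same cancellation you exhibit). Your decomposition $u^2+v^2=2E+2c(u-c)+c^2$ together with $\int_{t_0}^{\mathcal T+t_0}(u-c)\,\ud t=-\lambda$ correctly yields the identity $\mathcal E(p)=\tfrac{c^2}{2}\mathcal T(p)-\tfrac{c\lambda}{2}$ (which is consistent with expanding the paper's formula $\eqref{5.5}$), and since $\mathcal T(p)>\lambda/c$ this is positive; convexity and monotonicity of $\mathcal E$ then do follow from Theorem \ref{Theorem4.1}. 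The paper instead proves all properties at once by applying Hardy's convexity theorem to the analytic function $\mathcal J(z)=\bigl(1+c(h^{-1})'\bigr)\bigl(-\tfrac{i}{k}\log z\bigr)$, writing $\mathcal E(p)=\tfrac{c\lambda}{2}M_2^2(\mathcal J,\ep^{-kp})$; it never passes through $\mathcal T$.

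The genuine gap is the log-convexity claim. The additive constant in your identity is $-c\lambda/2$, and it does \emph{not} vanish, so you cannot ``reduce $\mathcal E$ to a positive multiple of $\mathcal T(p)$.'' Subtracting a positive constant from a positive log-convex function does not preserve log-convexity: writing $f=\tfrac{c^2}{2}\mathcal T$ and $b=\tfrac{c\lambda}{2}>0$, log-convexity of $f-b$ requires $(f-b)f''\geq (f')^2$, i.e.
\begin{equation*}
\bigl[f f''-(f')^2\bigr]-b f''\;\geq\;0,
\end{equation*}
and while the bracket is nonnegative by log-convexity of $\mathcal T$, the term $-bf''\leq 0$ (by convexity of $\mathcal T$) can outweigh it; e.g.\ $f(p)=\ep^{p^2}$ is log-convex but $f-1$ is not near $p=1$. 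So log-convexity of $\mathcal E$ simply does not descend from Theorem \ref{Theorem4.1} along your route. To close the gap you must work with the analytic function $J=1+c(h^{-1})'=(h'+c)/h'$ (analytic since $h'\neq 0$ by $\eqref{a1}$), so that $\mathcal E(p)=\tfrac{1}{2}\int_0^{c\lambda}|J(q,p)|^2\,\ud q$ is, up to the factor $\tfrac{c\lambda}{2}$, the squared $H^2$ integral mean of $\mathcal J$ on the circle $|z|=\ep^{-kp}$; Hardy's convexity theorem then gives convexity of $\log M_2(\mathcal J,r)$ in $\log r=-kp$, hence convexity of $\log\mathcal E(p)$ in $p$. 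This is precisely the step in the paper's proof that your proposal is missing.
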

\begin{proof}
Note that the hodograph transform $h=q+ip=\varphi+i\psi$ is analytic in the interior of $D$, and $h'=u-c+i(-v)$.
For $t\in[0,\mathcal T(p)]$, we have
\begin{align*}
\mathcal E( p,x_0)&=\frac{1}{2}\int_0^{\mathcal T(p)}[(x'(t;x_0))^2+(y'(t;x_0))^2]\,\ud t\\
&=\frac{1}{2}\int_0^{\mathcal T(p)}[((x(t;x_0)-ct)'+c)^2+(y'(t;x_0))^2]\,\ud t\\
&=\frac{1}{2}\int_0^{\mathcal T(p)}|h'(x(t;x_0)-ct, y'(t;x_0))+c|^2\,\ud t\\
&=\frac{1}{2}\int_0^{\mathcal T(p)}|h'(x(t;x_0)-ct, \xi_p'(x(t;x_0)-ct))+c|^2\,\ud t.
\end{align*}
Let $\Theta(t)=x(t;x_0)-ct$, then $\ud \Theta=(u(x(t;x_0)-ct, y(t;x_0))-c)\ud t$. Moreover, we obtain
\begin{equation*}
\Theta(0)=x_0, \quad \Theta(\mathcal T(p))=x(\mathcal T;x_0)-cT(p)=x_0-\lambda.
\end{equation*}
Since $u$ and $h'$ are periodic in the $x$-variable, we have
\begin{align}\label{5.4}
\mathcal E(p,x_0)&=\frac{1}{2}\int_{x_0}^{x_0-\lambda}|h'(\Theta,\xi_p(\Theta))+c|^2 \frac{1}{u(\Theta,\xi_p(\Theta))-c}\,\ud \Theta\nonumber\\
&=\frac{1}{2}\int_{-\frac{\lambda}{2}}^{\frac{\lambda}{2}}|h'(x,\xi_p(x))+c|^2 \frac{1}{c-u(x,\xi_p(x))}\,\ud x.
\end{align}
Consequently, we observe that the function $\mathcal E$  depends solely on the streamline $p=\psi$, and is independent of $x_0$.
In light of $\eqref{4.10}$ and $\eqref{4.11}$, we have
\begin{align}\label{5.5}
\mathcal E(p)&=\frac{1}{2}\int_{-\frac{\lambda}{2}}^{\frac{\lambda}{2}}|h'(x,\xi_p(x))+c|^2 \frac{1}{c-u(x,\xi_p(x))}\,\ud x\nonumber\\
&=\frac{1}{2}\int_{0}^{c\lambda}\left|\frac{h'+c}{h'}\right|^2(h^{-1}(q,p)) \,\ud q\nonumber\\
&=\frac{1}{2}\int_{0}^{c\lambda}\left|1+c(h^{-1})'(q,p)\right|^2 \,\ud q.
\end{align}
When $c=0$, we can infer that $\mathbb E(p)=\mathcal E(p)=c\lambda/2$.
Note that $ J(q,p)=(1+c(h^{-1})')(q,p)$ is analytic on $\Omega$. Consider that the map $\mathcal S\rightarrow \mathbb C$, given by
\begin{equation*}
\mathcal J(z)=J(-\frac{i}{k}\log z),
\end{equation*}
where
\begin{align*}
\mathcal{S} = \left\{z \in \mathbb{C}: |z| < 1\right\}.
\end{align*}
According to the Morera's theorem, $\mathcal J(z)$ is analytic on $\mathcal S$.
Let $\theta=kq$ and $r=\mathrm{e}^{-kp}$, where $k=2\pi/c\lambda$, we find
\begin{align*}
\mathcal E(-\frac{\log r}{k})&=\frac{1}{2k}\int_{0}^{2\pi}\left|J(\frac{\theta}{k},-\frac{\log r}{k})\right|^2 \,\ud \theta\\
&=\frac{1}{2k}\int_{0}^{2\pi}\left|J(\frac{\theta}{k}-i\frac{\log r}{k})\right|^2 \,\ud \theta\\
&=\frac{1}{2k}\int_{0}^{2\pi}\left|J(-\frac{i}{k}\log(r\mathrm{e}^{i\theta}))\right|^2 \,\ud \theta\\
&=\frac{c\lambda}{2}M_2(\mathcal J,r).
\end{align*}
Let $r=\mathrm{e}^{-kp}$, we have
\begin{equation*}
\mathcal E(p)=\frac{c\lambda}{2}M_2(\mathcal J,\mathrm{e}^{-kp}).
\end{equation*}
According to the Hardy's convexity theorem, for $r<1$ the function $M_2(\mathcal J,r)$ is non-decreasing, and $\log M_2(\mathcal J,r)$ is convex.
The exact same argument as the one used in the proof of Theorem $\ref{Theorem3.1}$ shows that $ \mathcal E(p)$ is a convex and non-increasing function. Moreover, $\log \mathcal E(p)$ is a convex function.
\end{proof}
 Moreover, in light of $\eqref{4.12}$, we have
\begin{align}\label{5.6}
\mathcal E(p)=\frac{1}{2}\int_{0}^{c\lambda}\frac{E_0}{E}(h^{-1}(q,p)) \,\ud q,
\end{align}
where $E_0=(u^2+v^2)/2$ is kinetic energy for a fluid particle at different depths. By $\eqref{3.1}$ and $\eqref{5.6}$, we have
\begin{align*}
\mathcal E(p)=\frac{c\lambda}{2} \mu_1(E_0\cdot E^{-1},p).
\end{align*}
Therefore, for $p\in(0,+\infty)$, the function $ \mu_1(E_0\cdot E^{-1},p)$ is a convex and non-increasing. Moreover, $\log\mu_1(E_0\cdot E^{-1},p)$ is a convex function.
\begin{remark}
For $p\in(0,+\infty)$, Theorem $\ref{Theorem5.1}$ shows that the total kinetic energy $\mathcal E(p)$ over a streamline time-period $\mathcal T(p)$  decreases with the elevation of the streamline $p$, and attains its maximum on the free surface.
\end{remark}
Now, we consider extending Theorem $\ref{Theorem5.1}$ to the general case.
\begin{theorem}\label{Theorem5.2}
Suppose $s\in(-\infty, -\frac{1}{2}]\cup [\frac{1}{2}, +\infty)\cup\{0\}$, $t\in[0,\mathcal T(p)]$ and $(q,p)\in(0,c\lambda)\times(0,+\infty)$. The function $\mathcal E_s(p,x_0)$ given by
\begin{align*}
\mathcal E_s( p,x_0)=\frac{1}{2}\int_0^{\mathcal T(p)}[(x_p'(t;x_0))^2+(y_p'(t;x_0)^2]^s\,\ud t
\end{align*}
is convex and non-increasing, which  depends solely on the streamline $p=\psi$ and is independent of $x_0$. Moreover, the function $\log\mathcal E_s(p,x_0)$ is  convex. For $s\in (-\infty, \frac{1}{2}]\cup [\frac{3}{2}, +\infty)$, passing to the moving frame, the function
\begin{align*}
\mathbb E_s(p,x_0)=\frac{1}{2}\int_0^{\mathcal T(p)}[(x_p'(t;x_0)-c)^2+(y_p'(t;x_0))^2]^s\,\ud t
\end{align*}
is convex and non-increasing, which  depends solely on the streamline $p=\psi$ and is independent of $x_0$. Moreover, the function $\log\mathbb E_s(q,p)$ is convex.
\end{theorem}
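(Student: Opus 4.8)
The plan is to mirror the method of Theorems \ref{Theorem3.1} and \ref{Theorem5.1}: first strip out the dependence on $x_0$, then rewrite each time integral as a $q$-integral over $(0,c\lambda)$ of the modulus of a suitable analytic function, and finally invoke Theorem \ref{Theorem3.1} (equivalently Hardy's convexity theorem on the unit disk). To establish independence of $x_0$ I would repeat the computation behind \eqref{5.4}: setting $\Theta(t)=x_p(t;x_0)-ct$, one has $\ud\Theta=(u-c)\,\ud t$ with $\Theta(0)=x_0$ and $\Theta(\mathcal T(p))=x_0-\lambda$, and since $x_p'=u$, $y_p'=v$ are $\lambda$-periodic in $x$, both integrands are $\lambda$-periodic, so the time integral over $[0,\mathcal T(p)]$ collapses to a spatial integral over one period $[-\lambda/2,\lambda/2]$ carrying no $x_0$. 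This simultaneously yields that $\mathcal E_s$ and $\mathbb E_s$ depend only on $p$.

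Next I would change variables from $x$ to $q$ via $q'(x)=2E/(u-c)$ from \eqref{4.11}--\eqref{4.12}. For the moving-frame quantity, since $(u-c)^2+v^2=2E$, this gives the clean identity
\begin{equation*}
\mathbb E_s(p)=\frac12\int_0^{c\lambda}(2E)^{s-1}\,\ud q=2^{s-2}c\lambda\,\mu_{s-1}(E,p),
\end{equation*}
so Theorem \ref{Theorem3.1} applied with index $s-1$ delivers convexity, the non-increasing property, and log-convexity exactly when $s-1\in(-\infty,-1/2]\cup[1/2,+\infty)$, i.e. $s\in(-\infty,1/2]\cup[3/2,+\infty)$, which is the stated range. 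For the fixed-frame quantity I would use $u^2+v^2=|h'+c|^2=2E_0$ together with $2E=|h'|^2$ to obtain
\begin{equation*}
\mathcal E_s(p)=\frac12\int_0^{c\lambda}(2E_0)^s(2E)^{-1}\,\ud q=\frac12\int_0^{c\lambda}|h'+c|^{2s}|h'|^{-2}\,\ud q.
\end{equation*}
The crux is to recognize the integrand as $|\Xi|^{2s}$ for the analytic function $\Xi=(h'+c)/(h')^{1/s}$ (for $s\neq0$): here $(h')^{1/s}$ is a single-valued analytic branch because $h'\neq0$ on the simply connected disk (guaranteed by \eqref{a1}), while $h'+c$ enters only to the first power. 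Transporting $\Xi$ to the disk by $z\mapsto-\tfrac ik\log z$ and invoking Morera's theorem as before gives $\mathcal E_s(p)=\tfrac{c\lambda}{2}M_{2s}^{2s}(\tilde\Xi,\mathrm e^{-kp})$. The case $s=0$ is separate but immediate, since $\mathcal E_0(p)=\mathcal T(p)/2$ is already covered by Theorem \ref{Theorem4.1}.

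For convexity and logarithmic convexity I would then run the Hardy argument verbatim: $\log M_{2s}(\tilde\Xi,r)$ is convex in $\log r$, hence $r\mapsto M_{2s}(\tilde\Xi,r)$ is convex, and the chain-rule computations of Theorem \ref{Theorem3.1} (using $2s-1\ge0$ when $s\ge1/2$) promote this to convexity and monotonicity of $p\mapsto M_{2s}^{2s}(\tilde\Xi,\mathrm e^{-kp})$; the non-increasing conclusion is sealed by the limit estimate $\lim_{p\to+\infty}\sup_q\partial_p(\cdot)=0$ as in \eqref{3.8}--\eqref{3.11}. The analogue for $\mathbb E_s$ is inherited directly from $\mu_{s-1}(E,p)$.

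The step I expect to be the main obstacle is the fixed-frame case $s\le-1/2$. There $2s\le-1$, so one must pass to the reciprocal $1/\tilde\Xi$ in order to apply Hardy with a positive exponent $-2s\ge1$; however $1/\tilde\Xi$ inherits a singularity at $z=0$ from the zero of $h'+c=u-iv$ at the deep-water limit, where $(u,v)\to(0,0)$ by \eqref{1.7}. Controlling this singularity—and in particular justifying the genuine monotonicity rather than the mere Hadamard three-circles convexity that one gets for a function analytic only on a punctured disk—is the delicate point requiring care. By contrast, the branch $s\ge1/2$ and the entire $\mathbb E_s$ statement follow directly from Theorem \ref{Theorem3.1} with the appropriate index shift.
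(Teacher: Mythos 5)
Your proposal follows essentially the same route as the paper's proof: reduce the time integrals to spatial integrals over one period via $\Theta(t)=x(t;x_0)-ct$ (yielding independence of $x_0$), identify $\mathbb E_s(p)=2^{s-2}c\lambda\,\mu_{s-1}(E,p)$ and invoke Theorem \ref{Theorem3.1} with the shifted index $s-1$ (whence the range $s\in(-\infty,1/2]\cup[3/2,+\infty)$), dispose of $s=0$ via $\mathcal E_0=\mathcal T/2$ and Theorem \ref{Theorem4.1}, and for the remaining cases write the integrand of $\mathcal E_s$ as $|\Xi|^{2s}$ for an analytic $\Xi$ and run Hardy's convexity theorem on the disk --- your $\Xi=(h'+c)/(h')^{1/s}$ is exactly the paper's $(h'+c)^{m_1}/(h')^{m_2}$ with $s=m_1/m_2$ and exponent $2/m_2$. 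The obstacle you flag for $\mathcal E_s$ with $s\le-\tfrac12$ is genuine: since $h'\to-c$ as $p\to+\infty$ by \eqref{1.7}, the transported function corresponding to $h'+c$ vanishes at the centre $z=0$ of $\mathcal S$, so for negative $s$ the relevant function has a pole there and Hardy's theorem (which requires analyticity on the whole disk) does not apply directly. The paper does not address this either --- it simply writes $\mathcal E_s(p)=\tfrac12\int_0^{c\lambda}\bigl|(h'+c)^{m_1}/(h')^{m_2}\bigr|^{2/m_2}(h^{-1}(q,p))\,\ud q$ and asserts the conclusion ``similarly'' --- so on that sub-case your account is no less complete than the paper's own.
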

\begin{proof}
The proof this theorem is similar to Theorem $\ref{Theorem5.1}$. Therefore, we only sketch some of main steps.
In light of $\eqref{5.4}$-$\eqref{5.5}$, we have
\begin{align}
\mathcal E_s( p,x_0)&=\frac{1}{2}\int_{-\frac{\lambda}{2}}^{\frac{\lambda}{2}}|h'(x,\xi_p(x))+c|^{2s} \frac{1}{c-u(x,\xi_p(x))}\,\ud x \nonumber\\
&=\frac{1}{2}\int_{0}^{c\lambda}\frac{|h'+c|^{2s}}{|h'|^2}(h^{-1}(q,p)) \, \ud q.\nonumber
\end{align}
Let $c=0$, we have
\begin{align}
\mathbb E_s(p,x_0)=\frac{1}{2}\int_{0}^{c\lambda}|h'|^{2(s-1)}(h^{-1}(q,p)) \, \ud q.\nonumber
\end{align}
In light of $\eqref{4.12}$, we obtain
\begin{align}\label{5.10}
\mathbb E_s(p,x_0)=2^{s-2}\int_{0}^{c\lambda}E^{s-1}(h^{-1}(q,p)) \,\ud q
\end{align}
and
\begin{align}\label{5.11}
\mathcal E_s(p,x_0)=2^{s-2}\int_{0}^{c\lambda}\frac{E_0^s}{E}(h^{-1}(q,p)) \,\ud q,
\end{align}
where $E_0=(u^2+v^2)/2$ and $E=((u-c)^2+v^2)/2$.
By $\eqref{5.10}$ and $\eqref{5.11}$, then we find that $\mathcal E_s$  and $\mathbb E_s$  depend solely on the streamline $p=\psi$, and are independent of $x_0$.
For $s-1\in(-\infty,-\frac{1}{2}]\cup[\frac{1}{2},+\infty)$, from $\eqref{3.1}$, we obtain
\begin{align*}
\mathbb E_s(p)=2^{s-2}c\lambda \cdot\mu_{s-1}(E,p).
\end{align*}
Therefore, for $p\in(0,+\infty)$ and $s\in(-\infty,\frac{1}{2}]\cup[\frac{3}{2},+\infty)$, according to Theorem $\ref{Theorem3.1}$, the function $\mathbb E_s(p)$ is convex and non-increasing, and $\log \mathbb E_s(p)$ is convex.
For the function $\mathcal E_s(p)$, when $s=0$, we get
\begin{align*}
\mathcal E_0(p)=\frac{\mathcal T(p)}{2}.
\end{align*}
According to Theorem $\ref{Theorem4.1}$, for $p\in(0,+\infty)$, the function $\mathcal E_s(p)$ is convex and non-increasing, and $\log \mathcal E_s(p)$ is convex.
For $s\in(-\infty,-\frac{1}{2}]\cup[\frac{1}{2},+\infty)$, we have
\begin{align*}
\mathcal E_s(p)=2^{s-2}c\lambda \cdot\mu_1(E_0^{s}\cdot E^{-1},p).
\end{align*}
If we let $s=m_1/m_2$, then we find
\begin{align*}
\mathcal E_s(p)=\frac{1}{2}\int_{0}^{c\lambda}\left|\frac{(h'+c)^{m_1}}{(h')^{m_2}}(h^{-1}(q,p))\right|^{\frac{2}{m_2}} \, \ud q.
\end{align*}
Similar to the argument in Theorem $\ref{3.1}$ and Theorem $\ref{Theorem5.1}$, applying the Hardy's convexity theorem, the function $\mathcal E_s(p)$ is convex and non-increasing, and $\log \mathcal E_s(p)$ is also convex.
\end{proof}

\section*{Data availability }
No data was used for the research described in the article.

\section*{Conflict of interest }
The authors do not have any other competing interests to declare.

\end{document}